\DeclareMathOperator{\Gal}{Gal}
\DeclareMathOperator{\rank}{rank}
\newcommand{\tors}{\mathrm{tors}}
\DeclareMathOperator{\Sel}{Sel}
\newtheorem{theorem}{Theorem}[section]
\newtheorem*{theorem*}{Theorem}
\newtheorem{lemma}[theorem]{Lemma}
\newtheorem{proposition}[theorem]{Proposition}
\newtheorem{corollary}[theorem]{Corollary}
\numberwithin{equation}{section}
\newtheorem{lthm}{Theorem} 
\theoremstyle{remark}
\newtheorem{remark}[theorem]{Remark}
\newcommand\EatDot[1]{}
\newcommand{\cH}{{\mathcal{H}}}
\newcommand{\loc}{\mathrm{loc}}
\newcommand{\cS}{{\mathcal{S}}}
\newcommand{\cX}{\mathcal{X}}
\newcommand{\QQ}{\mathbb{Q}}
\newcommand{\ZZ}{\mathbb{Z}}
\newcommand{\Qp}{\mathbb{Q}_p}
\newcommand{\Zp}{\mathbb{Z}_p}
\newcommand{\Srel}{\cS_\mathrm{rel}}
\newcommand{\Xstr}{\cX_\mathrm{str}}
\definecolor{Green}{rgb}{0.0, 0.5, 0.0}
\newcommand{\cO}{\mathcal{O}}
\newcommand{\cC}{\mathcal{C}}
\newcommand{\fm}{\mathfrak{m}}
\newcommand{\Char}{\mathrm{Char}}
\newcommand{\Xrel}{\cX_\mathrm{rel}}
\newcommand{\Cloc}{\cC_\mathrm{loc}}
\newcommand{\Sstr}{\cS_\mathrm{str}}
  \DeclareFontFamily{U}{wncy}{}
  \DeclareFontShape{U}{wncy}{m}{n}{<->wncyr10}{}
  \DeclareSymbolFont{mcy}{U}{wncy}{m}{n}
  \DeclareMathSymbol{\sha}{\mathord}{mcy}{"58}
  \DeclareMathSymbol{\zhe}{\mathord}{mcy}{"11}
\title[Characteristic elements of anticyc. Selmer groups of CM curves]{A remark on the characteristic elements of anticyclotomic Selmer groups of elliptic curves with complex multiplication at supersingular primes}
\let\@wraptoccontribs\wraptoccontribs
\definecolor{lime}{HTML}{A6CE39}
\DeclareRobustCommand{\orcidicon}{%
	\begin{tikzpicture}
	\draw[lime, fill=lime] (0,0) 
	circle [radius=0.16] 
	node[white] {{\fontfamily{qag}\selectfont \tiny ID}};
	\draw[white, fill=white] (-0.0625,0.095) 
	circle [radius=0.007];
	\end{tikzpicture}
	\hspace{-2mm}
}
\xdef\csname orcid\x\endcsname{\noexpand\href{https://orcid.org/\csname orcidauthor\x\endcsname}{\noexpand\orcidicon}}
\author[A.~Lei]{Antonio Lei\orcidA{}}
\address[Lei]{Department of Mathematics and Statistics\\University of Ottawa\\
150 Louis-Pasteur Pvt\\
Ottawa, ON\\
Canada K1N 6N5}
\email{antonio.lei@uottawa.ca}
\subjclass[2020]{11R23 (primary); 11G05, 11R20, 14K22 (secondary)}
\keywords{Anticyclotomic Iwasawa theory, CM elliptic curves, supersingular primes, plus and minus Selmer groups, fine Selmer groups.}
\begin{document}
\begin{abstract}
Let $p\ge5$ be a prime number. Let $E/\mathbb{Q}$ be an elliptic curve with complex multiplication by an imaginary quadratic field $K$ such that $p$ is inert in $K$ and that $E$ has good reduction at $p$. Let $K_\infty$ be the anticyclotomic $\mathbb{Z}_p$-extension of $K$. Agboola--Howard defined Kobayashi-type signed Selmer groups of $E$ over $K_\infty$ and showed that exactly one of them is cotorsion over the corresponding Iwasawa algebra. In this short note, we discuss a link between the characteristic ideals of the cotorsion signed Selmer group and the fine Selmer group building on a recent breakthrough of Burungale--Kobayashi--Ota on the structure of local points. 
\end{abstract}

\maketitle

\section{Introduction}
\label{S: Intro}
Throughout this note, $p\ge5$ is a fixed prime number. Let $E/\QQ$ be an elliptic curve with complex multiplication by an imaginary quadratic field $K$ where $p$ is inert. Furthermore, we assume that $p$ does not divide the conductor of $E$. In particular, $E$ has good supersingular reduction at $p$ and $a_p(E)=0$.
We write $\Gamma=\Gal(K_\infty/K)$ and $\Lambda=\cO[[\Gamma]]$, where $\cO$ is the ring of integers of the unramified quadratic extension of $\Qp$.

Similar to the cyclotomic theory, anticyclotomic Iwasawa Theory at supersingular primes is more intricate than the ordinary counterpart. Inspired by earlier works of Rubin \cite{rubin87} and Kobayashi \cite{kobayashi03}, Agboola--Howard \cite{agboolahowardsupersingular} defined the anticyclotomic plus and minus Selmer groups attached to $E$. We are interested in understanding links between these plus and minus Selmer groups and the fine  Selmer group (also called strict Selmer group) of $E$ over $K_\infty$ generalizing recent results on their cyclotomic counterparts in \cite{LeiSujatha2,LL21}.

Let $\cX_\pm$  be the Pontryagin duals of the $p$-primary plus and minus (discrete) Selmer groups $\Sel_{\pm}(K_\infty,E[p^\infty])$ defined in \cite{agboolahowardsupersingular}.
Let $\epsilon\in\{+,-\}$ be the sign of the functional equation of $L(E/\QQ,s)$. Agboola--Howard  showed in \cite[Theorem~3.6]{agboolahowardsupersingular} that $\cX_\epsilon$ is of rank one over $\Lambda$, whereas $\cX_{-\epsilon}$ is a torsion $\Lambda$-module. The main goal of this note is to study relations between $\cX_{-\epsilon}$ and the Pontryagin dual of the fine Selmer group of $E$ over $K_\infty$, denoted by $\Xstr$ in the present note. Let $L_{-\epsilon}$ (respectively $L_0$) be a characteristic element of $\cX_{-\epsilon}$ (respectively $\Xstr$). Building on the validity of Rubin's conjecture on local plus and minus points due to Burungale--Kobayashi--Ota, we prove:
\begin{lthm}[{Theorem~\ref{thm:main}}]\label{thmA}
As elements of $\Lambda$, we have the divisibilities
\[
\mathrm{lcm}(L_0,L_0^\iota)\large| L_{-\epsilon}\large| L_0L_0^\iota.
\]
\end{lthm}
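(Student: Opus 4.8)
The plan is to isolate the single prime $\p$ of $\Kinf$ above $p$ at which the three relevant Selmer structures---strict (fine), $(-\epsilon)$-signed, and relaxed---differ, to reduce the asserted double divisibility to a statement about one local module, and then to settle that local statement by combining Poitou--Tate duality with the Burungale--Kobayashi--Ota structure theorem for local points.

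First I would set up the comparison at $\p$. Because the $(-\epsilon)$-local condition contains the strict (zero) condition, the definitions give a short exact sequence
$$0 \to \Sstr \to \Sel_{-\epsilon}(\Kinf,\Ep) \to \mathcal{I} \to 0,$$
where $\mathcal{I}\subseteq H^1_{-\epsilon}(K_{\infty,\p},\Ep)$ is the image of the localisation map. Since $\cX_{-\epsilon}$ is $\Lambda$-torsion by Agboola--Howard and $\Xstr$ is $\Lambda$-torsion (so that $L_0$ is defined), so is $\mathcal{I}^\vee$, and Pontryagin duality turns this into $0 \to \mathcal{I}^\vee \to \cX_{-\epsilon} \to \Xstr \to 0$. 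Multiplicativity of characteristic ideals then yields $L_{-\epsilon}\doteq L_0\cdot\Char(\mathcal{I}^\vee)$, and the theorem becomes equivalent (dividing through by $L_0$ and using that $\Lambda$ is a UFD, so that $\mathrm{lcm}(L_0,L_0^\iota)\cdot\gcd(L_0,L_0^\iota)\doteq L_0L_0^\iota$) to the single sandwich
$$\frac{L_0^\iota}{\gcd(L_0,L_0^\iota)}\ \Big|\ \Char(\mathcal{I}^\vee)\ \Big|\ L_0^\iota.$$

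The heart of the matter is therefore to compute $\Char(\mathcal{I}^\vee)$ up to a factor dividing $\gcd(L_0,L_0^\iota)$, and this is where I would invoke the recent breakthrough. The point is that $\mathcal{I}$ is a corank-zero submodule of the corank-one module $H^1_{-\epsilon}(K_{\infty,\p},\Ep)$, so its cokernel is governed, via the Poitou--Tate sequence, by the Selmer group attached to the orthogonal complement of the $(-\epsilon)$-condition. Rubin's conjecture on local plus/minus points, now a theorem of Burungale--Kobayashi--Ota, is exactly what makes the signed Coleman maps $\col,\uCol$ explicit enough to identify $H^1_{-\epsilon}(K_{\infty,\p},\Ep)$ and the local cokernel term $\Cloc$ as concrete $\Lambda$-modules. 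Because $\Ep$ is, up to twist, self-dual and Pontryagin duality inverts the $\Gamma$-action, this local cokernel is linked by global duality to the fine Selmer group with the contragredient $\Gamma$-action, whose characteristic ideal is $L_0^\iota$; tracking the local modules then pins $\Char(\mathcal{I}^\vee)$ to $L_0^\iota$ up to a factor that one can only bound by $\gcd(L_0,L_0^\iota)$, which is precisely why one obtains divisibilities rather than an equality.

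I expect the local analysis in the previous step to be the main obstacle: extracting the characteristic ideal of $\mathcal{I}^\vee$ requires knowing the $\Lambda$-module structure of the local points closely enough to control both the image and the cokernel of the Coleman maps, and it is only the Burungale--Kobayashi--Ota theorem that supplies this. Granting it, the two divisibilities in the sandwich above give $L_0^\iota\mid L_{-\epsilon}$ and $L_{-\epsilon}\mid L_0L_0^\iota$; combined with the tautological $L_0\mid L_{-\epsilon}$ coming from the surjection $\cX_{-\epsilon}\twoheadrightarrow\Xstr$, the lower bound upgrades to $\mathrm{lcm}(L_0,L_0^\iota)\mid L_{-\epsilon}$, completing the proof.
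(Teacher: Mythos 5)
Your opening reduction is correct, and it is essentially the paper's own skeleton in dual form: the tautological exact sequence $0\to\mathcal{I}^\vee\to\cX_{-\epsilon}\to\Xstr\to0$ plus multiplicativity of characteristic ideals gives $L_{-\epsilon}\doteq L_0\cdot\Char_\Lambda(\mathcal{I}^\vee)$ (in the paper this module appears as $\cH^1_\epsilon/\Srel$, via the Poitou--Tate sequence \eqref{eq:PT-epsilon}), and the theorem is indeed equivalent to your sandwich $L_0^\iota/\gcd(L_0,L_0^\iota)\,\big|\,\Char_\Lambda(\mathcal{I}^\vee)\,\big|\,L_0^\iota$. The problem is that the sandwich itself --- the entire content of the theorem --- is asserted rather than proved: the paragraph meant to establish it contains no mechanism, only the claim that ``tracking the local modules'' pins down $\Char_\Lambda(\mathcal{I}^\vee)$ up to a $\gcd$-sized factor.

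Concretely, two things are missing. First, the key arithmetic input is the equality $\Char_\Lambda(\Xrel)_\tors=\Char_\Lambda(\Xstr)^\iota$, i.e.\ that the torsion of the dual relaxed Selmer group has characteristic ideal $(L_0^\iota)$. This is \emph{not} a formal consequence of ``$\Ep$ is self-dual and Pontryagin duality inverts the $\Gamma$-action'': it is a substantive duality theorem of Wingberg for Selmer groups over $\Zp$-extensions, and it applies here only after one identifies the relaxed Selmer group with the true Selmer group, which is Billot's observation and is precisely where supersingularity of $p$ enters; your proposal neither states this equality nor indicates how it would be proved. Second, even granting it, you give no argument for either divisibility. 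The paper obtains both from the Poitou--Tate sequence $0\to\Srel\to\cH^1\to\Xrel\to\Xstr\to0$ (exact because $\Sstr=0$): the induced short exact sequence $0\to\cH^1/\Srel\to\Xrel\to\Xstr\to0$ yields injections $(\cH^1/\Srel)_\tors\hookrightarrow(\Xrel)_\tors$ and $(\Xrel)_\tors\big/(\cH^1/\Srel)_\tors\hookrightarrow\Xstr$. The first gives your upper bound, after identifying $(\cH^1/\Srel)_\tors$ with $\mathcal{I}^\vee$ --- note that $\mathcal{I}^\vee$ is a subquotient of the \emph{quotient} $\Xrel\twoheadrightarrow\cX_{-\epsilon}$, so it does not sit inside $(\Xrel)_\tors$ for free; one needs the compact-side realization $\mathcal{I}^\vee\cong\cH^1_\epsilon/\Srel$ together with the freeness of $\cH^1_{-\epsilon}$ from Proposition~\ref{prop:structure-H}. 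The second injection is exactly the missing reason why the defect is bounded by $\gcd(L_0,L_0^\iota)$ (equivalently, $L_0^\iota\mid L_{-\epsilon}$), which you present as an unexplained limitation of the method. Relatedly, the role you assign to Burungale--Kobayashi--Ota is off: $\Cloc$ is not a local cokernel term in a Poitou--Tate sequence but the image of the global module of elliptic units, and what the argument actually needs from their theorem is Proposition~\ref{prop:structure-H}, namely $\cH^1=\cH^1_+\oplus\cH^1_-$ with both summands free of rank one over $\Lambda$.
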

Here, $\iota$ denotes the involution on $\Lambda$ sending a group-like element $\sigma\in\Gamma$ to $\sigma^{-1}$ and $\mathrm{lcm}$ signifies the "least common multiple", which is well-defined up to a unit in $\Lambda$ (since $\Lambda$ is a unique factorization domain).

The statement of Theorem~\ref{thmA} is somewhat simpler and more precise than the cyclotomic analogues in \cite{LeiSujatha2,LL21}. The fact that only one of the two signed Selmer groups is cotorsion over $\Lambda$ allows us to describe  more precisely the $\Lambda$-structure of one of the modules we study (see Proposition~\ref{prop:H1/Cloc}). Such a result does not seem to be available in the cyclotomic case and is one of the key diverging points between the current and previous works.

\subsection*{Acknowledgement}
We thank Ashay Burungale for interesting discussions related to \cite{BKO} during the preparation of this note. We also thank Meng Fai Lim and Katharina Müller for helpful exchanges. The author is grateful to the anonymous referee for helpful comments and suggestions, which led to many improvements in this note. The author's research is supported by the NSERC Discovery Grants Program RGPIN-2020-04259 and RGPAS-2020-00096.

\section{Plus and minus Selmer groups}

For an integer $n\ge0$, $K_n$ denotes the $n$-th layer of the anticyclotomic tower $K_\infty/K$. By an abuse of notation, the unique prime of $K_n$ above $p$ will be denoted by $p$. We write $\Psi_n$ for the completion of $K_n$ at $p$. The maximal ideal of the ring of integers of $\Psi_n$ is denoted by $\fm_n$.
Let $G_n=\Gal(K_n/K)\cong \Gal(\Psi_n/\Psi_0)$.
 We define $\Xi_n^+$ (respectively $\Xi_n^-$) the set of characters of $G_n$ whose exact orders are even (respectively odd) powers of $p$. Let $\hat{E}$ denote the formal group of $E$ over $\Psi_0$. For $n\ge0$, we define the plus and minus subgroups
 \[
 \hat E(\Psi_n)^\pm=\left\{x\in\hat E(\fm_n):\sum_{\sigma\in G_n}\chi(\sigma)\log_{\hat 
 E}\left(x^\sigma\right)=0,\ \forall \chi\in \Xi_n^\mp \right\},
 \]
 where $\log_{\hat E}$ denote the formal logarithm of $\hat E$.
 
The Kummer map induces an injection $$\hat{E}(\fm_n)^\pm\otimes\ \Qp/\Zp\hookrightarrow H^1(\Psi_n,E[p^\infty]).$$ We write $H^1_\pm(\Psi_n,E[p^\infty])$ for its image.
Let $T$ be the $p$-adic Tate module of $E$.  Let $H^1_\pm(\Psi_n,T)$ be the exact annihilator of $H^1_\pm(\Psi_n,E[p^\infty])$ under the Tate pairing
\[
H^1(\Psi_n,E[p^\infty])\times H^1(\Psi_n,T)\rightarrow\Qp/\Zp.
\]
We write $\cH^1_\pm$ (respectively $\cH^1)$ for the inverse limits $\varprojlim_n H^1_\pm(\Psi_n,T)$ (respectively $\varprojlim_n H^1(\Psi_n,T)$), where the connecting maps are corestrictions. The following proposition describes the structure of these $\Lambda$-modules.

\begin{proposition}
\label{prop:structure-H}
       \item[(i)] $\cH^1=\cH^1_+\oplus\cH^1_-$.
\item[(ii)] $\cH^1_\pm$ are free of rank one over $\Lambda$.
\end{proposition}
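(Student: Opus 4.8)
The plan is to combine three inputs: the structure theory of local Iwasawa cohomology, the explicit description of the signed conditions through the formal logarithm, and the fine structure of local points afforded by the Lubin--Tate nature of $\hat E$ over $\cO$. First I would record that $\cH^1=\HIw(\Psi_\infty,T)$ is free of rank two over $\Lambda$. By the complex multiplication, $T$ is free of rank one over $\cO$; and since $E$ has supersingular reduction, $E(\Psi_\infty)[p^\infty]$ is finite, so $H^0(\Psi_\infty,T)=0$. The standard structure theory of local Iwasawa cohomology then makes $\cH^1$ free over $\Lambda$ of rank $[\Psi_0:\Qp]\cdot\rank_{\cO}T=2$. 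In particular $\cH^1$ is torsion-free, hence so are its submodules $\cH^1_\pm$.

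Next I would unwind the defining conditions. Via $\log_{\hat E}$ the module $\hat E(\fm_n)\otimes_{\Zp}\Qp$ is identified $\cO[G_n]$-equivariantly with $\Psi_n$, which by the normal basis theorem is free of rank one over $\Psi_0[G_n]$. Over $\overline{\Qp}$ the group ring decomposes as a product over the characters of $G_n$, and the partition of these characters into $\Xi_n^+$ and $\Xi_n^-$ provides two complementary idempotents; the plus (respectively minus) condition is exactly the requirement that $\log_{\hat E}(x)$ lie in the even-order (respectively odd-order) eigenspace. Thus $\hat E(\fm_n)^+\otimes\Qp$ and $\hat E(\fm_n)^-\otimes\Qp$ are complementary, so $\hat E(\fm_n)^+\cap\hat E(\fm_n)^-$ is finite. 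Passing through the Kummer map to $H^1_\pm(\Psi_n,E[p^\infty])$ and dualizing by local Tate duality (under which sums of local conditions correspond to intersections of their annihilators, and conversely), I obtain in the inverse limit that $\rank_\Lambda\cH^1_\pm=1$ and, since $\cH^1$ is torsion-free, that $\cH^1_+\cap\cH^1_-=0$.

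The main obstacle is to upgrade these rational statements to the integral assertions (i) and (ii): that each $\cH^1_\pm$ is genuinely \emph{free} of rank one rather than merely torsion-free, and that $\cH^1_+\oplus\cH^1_-\hookrightarrow\cH^1$ is an \emph{equality} rather than an inclusion of finite index. Here I would use that, because $p$ is inert and $E$ has complex multiplication, $\hat E$ carries over $\cO$ the structure of a (relative) Lubin--Tate formal group of height one; this is what produces well-behaved norm-compatible systems of signed local points in the tower $\Psi_\infty/\Psi_0$, in the spirit of Rubin \cite{rubin87}. The precise structure of these points --- the content of Rubin's conjecture, now established by Burungale--Kobayashi--Ota \cite{BKO} --- shows that $\varprojlim_n\hat E(\fm_n)^\pm$ is free of rank one over $\Lambda$ and that the plus and minus systems together generate. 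By local Tate duality the annihilator definition of $\cH^1_\pm$ matches the Kummer image of these norm-compatible points, which yields (ii); combined with $\cH^1_+\cap\cH^1_-=0$, the equality of ranks, and the generation statement, the finite-index discrepancy left by the rational argument is forced to vanish, giving the clean direct sum in (i). It is precisely this integral cleanliness, special to the anticyclotomic CM situation and unavailable in the cyclotomic theory, that allows the decomposition to hold on the nose.
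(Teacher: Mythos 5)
Your ultimate appeal --- Rubin's 1987 results together with Burungale--Kobayashi--Ota's proof of Rubin's conjecture --- is exactly the paper's proof, which consists of nothing more than the two citations \cite[Proposition~8.1]{rubin87} and \cite[Theorem~5.8]{BKO}. The problem lies in the formal arguments you wrap around these citations: they contain steps that fail, and they are not harmless, because your final paragraph uses their conclusions (``the equality of ranks'', ``$\cH^1_+\cap\cH^1_-=0$'') as genuine inputs.

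The error is in the duality bookkeeping of your second paragraph. Tate duality converts the \emph{sum} of the conditions $\hat E(\fm_n)^\pm\otimes\Qp/\Zp$ into the \emph{intersection} of their annihilators $H^1_\pm(\Psi_n,T)$. Since $\hat E(\fm_n)^++\hat E(\fm_n)^-$ has finite index in $\hat E(\fm_n)$, that intersection is, up to bounded error, the exact annihilator of the full Kummer image of $\hat E(\fm_n)\otimes\Qp/\Zp$, namely the Kummer image of the $p$-adic completion of $\hat E(\fm_n)$ --- a module of $\cO$-rank $p^n$. Put differently, the two Kummer images (of $\hat E(\fm_n)\otimes\Qp/\Zp$ and of the completion of $\hat E(\fm_n)$) annihilate each other, so the Kummer image of the local points lies in \emph{both} $H^1_+(\Psi_n,T)$ and $H^1_-(\Psi_n,T)$ at every level $n$. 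Consequently the rational complementarity of the $\pm$ point subgroups gives neither $\cH^1_+\cap\cH^1_-=0$ nor $\rank_\Lambda\cH^1_\pm=1$: at finite level $H^1_\pm(\Psi_n,T)$ has $\cO$-rank $p^n+\rank_\cO\hat E(\fm_n)^\mp$ (up to bounded error), and no formal passage to the inverse limit converts these counts into rank one --- the corestriction maps are far from surjective, and the fact that the limit collapses (equivalently, that norm-compatible systems of local points die) is part of the content of Rubin's conjecture, not a consequence of eigenspace counting. The same confusion undermines your third paragraph: the Kummer images of norm-compatible systems of points would land in $\cH^1_+\cap\cH^1_-$, which the proposition asserts is zero, so $\cH^1_\pm$ cannot be ``matched with'' such images; the actual comparison between norm-compatible local units/points and the annihilator modules $\cH^1_\pm$ goes through Coleman-map-type constructions, and this is precisely what \cite[Theorem~5.8]{BKO}, building on \cite[Proposition~8.1]{rubin87}, supplies. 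The repair is simple but real: discard the formal ``rational'' reductions and quote those two results for the statements themselves, as the paper does.
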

\begin{proof}
  This follows from \cite[Proposition~8.1]{rubin87} and  \cite[Theorem~5.8]{BKO}.
\end{proof}

We now define various Selmer groups that will be utilized and studied in this note. We follow closely the notation from \cite{agboolahowardsupersingular}.
Let $F=K_n$ for some $n\ge0$ and $v$ a place of $F$. We define $H^1_f(F_v,E[p^\infty])$ to be the image of $E(F_v)\otimes\Qp/\Zp$ inside $H^1_f(F_v,E[p^\infty])$ under the Kummer map and write $H^1_f(F_v,T)\subset H^1(F_v,T)$ for the orthogonal complement of $H^1_f(F_v,E[p^\infty])$ under the local Tate pairing. For $M=E[p^\infty]$ or $M=T$,  we define the \textit{relaxed} Selmer group 
\[
\Sel_\mathrm{rel}(F,M)=\ker\left(H^1(F,M)\rightarrow \prod_{v\nmid p} \frac{H^1(F_v,M)}{H^1_f(F_v,M)}\right),
\]
where the product runs over all places of $F$ not dividing $p$. We also define the \textit{strict} Selmer group by
\[
\Sel_\mathrm{str}(F,W)=\ker\left(\Sel_\mathrm{rel}(F,W)\rightarrow  H^1(\Psi_n,W)\right),
\]
and the plus and minus Selmer groups by
\[
\Sel_\pm(F,W)=\ker\left(\Sel_\mathrm{rel}(F,W)\rightarrow \frac{ H^1(\Psi_n,W)}{H^1_\pm(\Psi_n,W)}\right).
\]

For $\star\in\{\mathrm{rel},\mathrm{str},+,-\}$, we define
\[
\Sel_\star(K_\infty, E[p^\infty])=\varinjlim_n \Sel_\star(K_n,E[p^\infty]),\quad \text{and}\quad \cS_\star=\varprojlim_n \Sel_\star(K_n,T).
\]
Finally, the Pontryagin dual of $\Sel_\star(K_\infty, E[p^\infty])$ will be denoted by $\cX_\star$.

\section{Relating signed Selmer groups to fine Selmer groups}
Let $\cC$ be the $\Lambda$-module generated by the elliptic units in $\Srel$ as defined in \cite[\S3]{agboolahowardsupersingular}. We write $\Cloc$ for the image of $\cC$ in $\cH^1$ under the localization map. We review certain properties of $\cC$ as well as $\Lambda$-structures of  various Selmer groups that were proved in \cite{agboolahowardsupersingular}.
\begin{proposition}\label{prop:AH}
\item[(i)]$\Cloc$ lies in $\cH^1_\epsilon$ (in particular, $\cC\subseteq\cS_\epsilon$).
\item[(ii)]As $\Lambda$-modules, $\Srel$ is torsion-free of rank one, $\Xstr$ is torsion and $\Xrel$ has rank one.
\item[(iii)]$\Char_\Lambda \Xstr=\Char_\Lambda \Srel/\cC$.
\item[(iv)]$\rank_\Lambda\cX_\epsilon=\rank_\Lambda\cS_\epsilon=1$, $\rank_\Lambda\cX_{-\epsilon}=0$  and $\cS_{-\epsilon}=\cS_\mathrm{str}=0$.
\end{proposition}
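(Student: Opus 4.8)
The plan is to read off most of these assertions from \cite{agboolahowardsupersingular}, citing each precisely, and to supply by hand the short deductions concerning the compact Selmer modules in part~(iv), for which I can work directly with the decomposition of Proposition~\ref{prop:structure-H}. For~(i), I would invoke the explicit study of the elliptic unit system in \cite[\S3]{agboolahowardsupersingular}: its localization at $p$ is controlled by the anticyclotomic $p$-adic $L$-function, whose interpolation properties are governed by the sign $\epsilon$ of the functional equation, and this forces $\Cloc\subseteq\cH^1_\epsilon$. The parenthetical inclusion $\cC\subseteq\cS_\epsilon$ is then formal: $\cC\subseteq\Srel$ by construction, and the local condition at $p$ cutting out $\cS_\epsilon$ from $\Srel$ is exactly that the localization lie in $\cH^1_\epsilon$.

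For~(ii) and~(iii) I would cite the global results of \cite{agboolahowardsupersingular} directly. The torsion-freeness and rank-one property of $\Srel$, the torsionness of $\Xstr$, and the rank-one property of $\Xrel$ follow there from Poitou--Tate duality together with control of the Selmer groups along the anticyclotomic tower. The characteristic-ideal identity $\Char_\Lambda\Xstr=\Char_\Lambda\Srel/\cC$ is the anticyclotomic main conjecture for elliptic units in this setting, obtained as a two-sided divisibility, the lower bound from Rubin's Euler system argument and the reverse from the analytic side.

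The ranks $\rank_\Lambda\cX_\epsilon=1$ and $\rank_\Lambda\cX_{-\epsilon}=0$ are \cite[Theorem~3.6]{agboolahowardsupersingular}. The remaining statements in~(iv) I would establish directly. First, (iii) shows that $\Srel/\cC$ is $\Lambda$-torsion, so $\cC$ has rank one; combined with $\cC\subseteq\cS_\epsilon\subseteq\Srel$ and the rank-one property of $\Srel$ from~(ii), a rank count yields $\rank_\Lambda\cS_\epsilon=1$. For the vanishing, let $\lambda_\epsilon\colon\Srel\to\cH^1_\epsilon$ denote localization at $p$ followed by the projection attached to the splitting $\cH^1=\cH^1_+\oplus\cH^1_-$ of Proposition~\ref{prop:structure-H}. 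By definition $\cS_{-\epsilon}=\ker\lambda_\epsilon$, while~(i) gives $\lambda_\epsilon(\cC)=\Cloc$, which is nonzero; hence $\lambda_\epsilon\neq0$. As $\cH^1_\epsilon$ is free of rank one, $\Image\lambda_\epsilon$ then has rank one, whence $\rank_\Lambda\cS_{-\epsilon}=0$. Being a rank-zero submodule of the torsion-free module $\Srel$, the group $\cS_{-\epsilon}$ must vanish, and finally $\Sstr\subseteq\cS_{-\epsilon}=0$.

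The main point requiring care beyond citation is the nonvanishing of $\Cloc$ that powers the rank argument in~(iv)---that is, the nondegeneracy of the elliptic unit system, which I would extract from its rank-one generation of a submodule of $\cH^1_\epsilon$ in \cite{agboolahowardsupersingular} (note that $\Cloc=0$ would force $\Srel=\Sstr$, contradicting nondegeneracy)---together with the routine but unavoidable bookkeeping needed to reconcile Agboola--Howard's normalizations of signs, duals, and local conditions with the conventions fixed in Section~2.
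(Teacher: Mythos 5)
Your proposal is correct in substance, and for parts (i)--(iii) it coincides with the paper, whose entire proof is a one-line citation: ``This is \cite[Propositions~3.1 and 3.3, and Theorem 3.6]{agboolahowardsupersingular}.'' Where you diverge is part (iv): the paper simply cites Theorem~3.6 of Agboola--Howard for \emph{all} of it, including $\rank_\Lambda\cS_\epsilon=1$ and $\cS_{-\epsilon}=\Sstr=0$, whereas you re-derive the compact-Selmer statements from (i)--(iii) together with the splitting $\cH^1=\cH^1_+\oplus\cH^1_-$. Your derivation is sound: the rank count for $\cS_\epsilon$ via $\cC\subseteq\cS_\epsilon\subseteq\Srel$ works, the identification $\cS_{-\epsilon}=\ker\lambda_\epsilon$ is justified by left-exactness of inverse limits plus Proposition~\ref{prop:structure-H}(i), and a rank-zero submodule of the torsion-free module $\Srel$ indeed vanishes. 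The one point to handle carefully is exactly the one you flag: the nonvanishing of $\Cloc$. Your parenthetical justification (``$\Cloc=0$ would force $\Srel=\Sstr$, contradicting nondegeneracy'') is circular as stated, since the ``nondegeneracy'' of the elliptic-unit localization \emph{is} the assertion $\Cloc\neq0$; you must genuinely import it from Agboola--Howard (where it comes out of Rubin's Euler-system machinery), which is what they do inside the proof of their Theorem~3.6 --- so your argument partially re-proves that theorem rather than bypassing it. A cleaner, non-circular alternative available from your cited inputs alone: the full Poitou--Tate sequence $0\rightarrow\Sstr\rightarrow\Srel\rightarrow\cH^1\rightarrow\Xrel\rightarrow\Xstr\rightarrow0$ together with the ranks in (ii) and $\rank_\Lambda\cH^1=2$ forces $\rank_\Lambda\Sstr=0$, hence $\Sstr=0$ by torsion-freeness, hence $\Cloc\neq0$ since $\cC\neq0$ and localization is then injective on $\Srel$; your argument for $\cS_{-\epsilon}=0$ then proceeds as written. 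What your route buys is transparency about how the compact-Selmer vanishing follows from the main-conjecture inputs; what the paper's wholesale citation buys is immunity from precisely this kind of bookkeeping about which statement in Agboola--Howard depends on which.
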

\begin{proof}
This is \cite[Propositions~3.1 and 3.3, and Theorem 3.6]{agboolahowardsupersingular}.
\end{proof}

As in the introduction, we write $L_{-\epsilon}$ (respectively $L_0$) for a characteristic element of $\cX_{-\epsilon}$ (respectively $\Xstr$).
\begin{lemma}\label{lem:quotient}
    There is a pseudo-isomorphism of $\Lambda$-modules
    \[
    \Lambda/(L_{-\epsilon})\sim \cH^1_\epsilon /\cC_\loc.
    \]
\end{lemma}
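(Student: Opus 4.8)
The plan is to extract the characteristic ideal of $\cH^1_\epsilon/\Cloc$ from a Poitou--Tate exact sequence, show it equals $(L_{-\epsilon})$, and then promote this equality of characteristic ideals to the asserted pseudo-isomorphism by exploiting that $\cH^1_\epsilon/\Cloc$ is a cyclic $\Lambda$-module.

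First I would set up the localization map $\loc_p\colon\Srel\to\cH^1$ and use the decomposition $\cH^1=\cH^1_\epsilon\oplus\cH^1_{-\epsilon}$ of Proposition~\ref{prop:structure-H}(i); writing $\pi_\epsilon$ for the projection onto $\cH^1_\epsilon$, we have $\cH^1/\cH^1_{-\epsilon}\cong\cH^1_\epsilon$, and by definition $\ker(\pi_\epsilon\circ\loc_p)=\cS_{-\epsilon}$. The key input is the Poitou--Tate (Cassels) exact sequence attached to the pair consisting of the relaxed and the $-\epsilon$ conditions: passing to the inverse limit over the layers $K_n$ and inserting $\cS_{-\epsilon}=0$ from Proposition~\ref{prop:AH}(iv), it takes the form
\begin{equation*}
0\longrightarrow\Srel\xrightarrow{\ \pi_\epsilon\circ\loc_p\ }\cH^1_\epsilon\longrightarrow\cX_{-\epsilon}\longrightarrow\Xstr\longrightarrow0.\tag{$\ast$}
\end{equation*}
Here the two outer terms are governed by global duality: $H^1_{-\epsilon}(\Psi_n,T)$ is the exact annihilator of $H^1_{-\epsilon}(\Psi_n,E[p^\infty])$, so the local condition dual to $-\epsilon$ is again $-\epsilon$, while the condition dual to the relaxed one is the strict one, producing $\Xstr$. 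I expect the construction of $(\ast)$ -- verifying that the Poitou--Tate sequence behaves well under the inverse limit and correctly identifying the two dual terms -- to be the main obstacle; the remainder is formal.

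Next I would exploit injectivity. Since $\cS_{-\epsilon}=0$, the map $\pi_\epsilon\circ\loc_p$ is injective, so it identifies $\Srel$ with its image $A:=\pi_\epsilon(\loc_p(\Srel))\subseteq\cH^1_\epsilon$; moreover $\Cloc\subseteq\cH^1_\epsilon$ by Proposition~\ref{prop:AH}(i), so $\pi_\epsilon$ is the identity on $\Cloc$ and this same map identifies $\cC$ with $\Cloc$. Consequently $A/\Cloc\cong\Srel/\cC$, which is torsion with $\Char_\Lambda(\Srel/\cC)=\Char_\Lambda\Xstr$ by Proposition~\ref{prop:AH}(iii); in particular $\Cloc\neq0$, so $\cH^1_\epsilon/\Cloc$ is torsion. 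Now I would compare the two short exact sequences
\[
0\to\cH^1_\epsilon/A\to\cX_{-\epsilon}\to\Xstr\to0,\qquad 0\to A/\Cloc\to\cH^1_\epsilon/\Cloc\to\cH^1_\epsilon/A\to0,
\]
the first obtained from $(\ast)$ and the second from the inclusions $\Cloc\subseteq A\subseteq\cH^1_\epsilon$; all terms are torsion (using $\rank_\Lambda\cH^1_\epsilon=\rank_\Lambda A=1$). Multiplicativity of characteristic ideals then gives
\[
\Char_\Lambda(\cH^1_\epsilon/\Cloc)=\Char_\Lambda(A/\Cloc)\cdot\Char_\Lambda(\cH^1_\epsilon/A)=\Char_\Lambda\Xstr\cdot\Char_\Lambda(\cH^1_\epsilon/A)=\Char_\Lambda\cX_{-\epsilon}=(L_{-\epsilon}).
\]

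Finally I would upgrade this to a pseudo-isomorphism. Because $\cH^1_\epsilon$ is free of rank one over $\Lambda$ by Proposition~\ref{prop:structure-H}(ii), the module $\cH^1_\epsilon/\Cloc$ is cyclic, say $\cH^1_\epsilon/\Cloc\cong\Lambda/J$ for a nonzero ideal $J$. Since $\Lambda$ is a two-dimensional regular local (hence factorial) ring, I can write $J=f\cdot J'$ with $f$ a greatest common divisor of generators of $J$ and $J'$ of height at least two; multiplication by $f$ then shows $\Lambda/J\sim\Lambda/(f)$ with pseudo-null kernel $\Lambda/J'$, and $(f)=\Char_\Lambda(\Lambda/J)$. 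Therefore $\cH^1_\epsilon/\Cloc\sim\Lambda/\Char_\Lambda(\cH^1_\epsilon/\Cloc)=\Lambda/(L_{-\epsilon})$, which is precisely the claimed pseudo-isomorphism.
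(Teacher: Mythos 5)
Your proof is correct, and it shares the paper's overall skeleton---identify the characteristic ideal of $\cH^1_\epsilon/\Cloc$, then upgrade to a pseudo-isomorphism via cyclicity---but it differs in where the characteristic-ideal identity comes from. The paper's own proof is two steps: it invokes freeness of $\cH^1_\epsilon$ (Proposition~\ref{prop:structure-H}(ii)) to get a pseudo-isomorphism $\cH^1_\epsilon/\Cloc\to\Lambda/(F)$, exactly as in your final paragraph, and then it simply \emph{cites} \cite[end of \S4]{agboolahowardsupersingular} for the fact that $\Char_\Lambda(\cH^1_\epsilon/\Cloc)=(L_{-\epsilon})$. You instead re-derive that fact: starting from the Poitou--Tate sequence (the paper's \eqref{eq:PT-epsilon}, obtained from \cite[proof of Theorem~3.6, (3.3)]{agboolahowardsupersingular} together with $\cS_{-\epsilon}=0$), you split it at the image $A$ of $\Srel$, use the identification $A/\Cloc\cong\Srel/\cC$ and Proposition~\ref{prop:AH}(iii) (the elliptic-unit main conjecture input), and apply multiplicativity of characteristic ideals in short exact sequences of torsion modules to conclude $\Char_\Lambda(\cH^1_\epsilon/\Cloc)=\Char_\Lambda\cX_{-\epsilon}=(L_{-\epsilon})$. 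All of your ingredients are results the paper quotes anyway, and there is no circularity since \eqref{eq:PT-epsilon} does not depend on the lemma; your route makes the lemma self-contained relative to the paper's stated inputs (and is presumably the argument behind the Agboola--Howard passage being cited), at the cost of being longer. As a small bonus, your computation gives $(L_{-\epsilon})=(L_0)\cdot\Char_\Lambda(\cH^1_\epsilon/A)$, hence the divisibility $L_0\mid L_{-\epsilon}$, which the paper only extracts later, from the surjectivity of the last arrow in \eqref{eq:PT-epsilon}, in the proof of Theorem~\ref{thm:main}. Your gcd/height argument for ``a cyclic torsion module is pseudo-isomorphic to $\Lambda$ modulo its characteristic ideal'' correctly fills in the step the paper leaves implicit; note finally that since both modules are torsion, the direction of the pseudo-isomorphism ($\cH^1_\epsilon/\Cloc\sim\Lambda/(L_{-\epsilon})$ in your write-up versus $\Lambda/(L_{-\epsilon})\sim\cH^1_\epsilon/\Cloc$ in the statement) is immaterial.
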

\begin{proof}
     Proposition~\ref{prop:structure-H}(ii) says that $\cH_\epsilon$ is free of rank one over $\Lambda$. Therefore, there is a pseudo-isomorphism $\cH^1_\epsilon/\cC_\loc\rightarrow \Lambda/(F)$ for some $F\in\Lambda$. As discussed in \cite[end of \S4]{agboolahowardsupersingular}, the characteristic ideal of $\cH^1_\epsilon/\cC_\loc$ is generated by $L_{-\epsilon}$, from which the lemma follows.
\end{proof}

\begin{proposition}\label{prop:H1/Cloc}
There is a pseudo-isomorphism of $\Lambda$-modules
\[
\cH^1/\Cloc\sim \Lambda\oplus\Lambda/(L_{-\epsilon}).
\]
\end{proposition}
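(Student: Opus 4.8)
The plan is to exploit the direct sum decomposition of $\cH^1$ from Proposition~\ref{prop:structure-H}(i) together with the fact, recorded in Proposition~\ref{prop:AH}(i), that $\Cloc$ lies entirely in the single summand $\cH^1_\epsilon$. Since the localized elliptic units are contained in $\cH^1_\epsilon$, the complementary summand $\cH^1_{-\epsilon}$ is unaffected by passing to the quotient. Concretely, I would first observe that the inclusion $\Cloc\subseteq\cH^1_\epsilon$ yields a genuine (not merely pseudo-) isomorphism of $\Lambda$-modules
\[
\cH^1/\Cloc=(\cH^1_\epsilon\oplus\cH^1_{-\epsilon})/\Cloc\cong(\cH^1_\epsilon/\Cloc)\oplus\cH^1_{-\epsilon}.
\]

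Next I would identify the two summands separately. By Proposition~\ref{prop:structure-H}(ii), $\cH^1_{-\epsilon}$ is free of rank one over $\Lambda$, hence isomorphic to $\Lambda$. For the remaining factor, Lemma~\ref{lem:quotient} supplies a pseudo-isomorphism $\cH^1_\epsilon/\Cloc\sim\Lambda/(L_{-\epsilon})$. Taking the direct sum of this pseudo-isomorphism with the identity on $\cH^1_{-\epsilon}$ then produces the desired map; since the kernel and cokernel of a direct sum of morphisms are the direct sums of the respective kernels and cokernels, and a direct sum of pseudo-null modules is pseudo-null, the result is again a pseudo-isomorphism. This gives
\[
\cH^1/\Cloc\sim\Lambda\oplus\Lambda/(L_{-\epsilon}),
\]
as claimed.

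It is worth emphasizing that all of the substantive input has already been packaged into the results we are permitted to cite: the freeness in Proposition~\ref{prop:structure-H} rests on the Burungale--Kobayashi--Ota resolution of Rubin's conjecture, and the identification of the characteristic ideal of $\cH^1_\epsilon/\Cloc$ underlying Lemma~\ref{lem:quotient} carries the analytic content through $L_{-\epsilon}$. The present argument is therefore essentially bookkeeping, and the only point that requires a moment's care is the first step --- checking that quotienting the direct sum $\cH^1_\epsilon\oplus\cH^1_{-\epsilon}$ by a submodule contained in one factor splits off the other factor cleanly. This is immediate from the universal property of the quotient, so I do not anticipate any genuine obstacle beyond correct bookkeeping.
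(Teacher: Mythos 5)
Your proposal is correct and follows exactly the paper's own argument: the splitting $\cH^1/\Cloc\cong(\cH^1_\epsilon/\Cloc)\oplus\cH^1_{-\epsilon}$ from Propositions~\ref{prop:structure-H}(i) and \ref{prop:AH}(i), then Proposition~\ref{prop:structure-H}(ii) for the free summand and Lemma~\ref{lem:quotient} for the torsion one. The only difference is that you spell out the bookkeeping (that a direct sum of pseudo-isomorphisms is a pseudo-isomorphism), which the paper leaves implicit.
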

\begin{proof}
Propositions~\ref{prop:structure-H}(i) and \ref{prop:AH}(i) imply that there is a $\Lambda$-isomoprhism
\[
\cH^1/\Cloc\cong\cH^1_{-\epsilon} \bigoplus\cH^1_\epsilon/\Cloc .
\]
Hence, the result follows from Proposition~\ref{prop:structure-H}(ii) and Lemma~\ref{lem:quotient}.
\end{proof}

By \cite[proof of Theorem~3.6, (3.3)]{agboolahowardsupersingular}, there is an exact sequence:
\begin{equation}\label{eq:PT}
   0\rightarrow  \cS_\pm\rightarrow\Srel\rightarrow\cH^1_\mp\rightarrow\cX_\pm\rightarrow\Xstr\rightarrow0,
\end{equation}
where we have replaced $\cH^1$ in loc. cit. by the direct sum $\cH^1_+\bigoplus\cH^1_-$ (as given by Proposition~\ref{prop:structure-H}(i)).
Combining Proposition~\ref{prop:AH}(iv) and \eqref{eq:PT} gives the following exact sequence:
\begin{equation}
    0\rightarrow \Srel\rightarrow\cH^1_\epsilon\rightarrow\cX_{-\epsilon}\rightarrow\Xstr\rightarrow0.
\label{eq:PT-epsilon}
\end{equation}
It then follows that the localization map induces an injection $\Srel\hookrightarrow \cH^1$.
By an abuse of notation, we shall denote the image of $\Srel$ inside $\cH^1$ by the same notation. Proposition~\ref{prop:AH}(ii) tells us that the quotient $\cH^1/\Srel$ is of rank one over $\Lambda$. We can describe the torsion part of this module:

\begin{corollary}\label{cor:H1/rel}We have $\Char_\Lambda(\cH^1/\Srel)_\tors=(L_{-\epsilon}/L_0)$. 
\end{corollary}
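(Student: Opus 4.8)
The plan is to reduce everything to the single torsion module $\cH^1_\epsilon/\Srel$ and then read off its characteristic ideal directly from the four-term exact sequence \eqref{eq:PT-epsilon}. First I would record that the localization map places $\Srel$ entirely inside the summand $\cH^1_\epsilon$: this is exactly what \eqref{eq:PT-epsilon} encodes, since that sequence is obtained from \eqref{eq:PT} with sign $-\epsilon$ (where the map out of $\Srel$ targets $\cH^1_\epsilon$) after using $\cS_{-\epsilon}=0$ from Proposition~\ref{prop:AH}(iv). Combined with the decomposition $\cH^1=\cH^1_\epsilon\oplus\cH^1_{-\epsilon}$ of Proposition~\ref{prop:structure-H}(i), this gives an isomorphism
\[
\cH^1/\Srel\cong(\cH^1_\epsilon/\Srel)\oplus\cH^1_{-\epsilon}.
\]

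Next I would identify the torsion submodule of the left-hand side. Since $\cH^1_{-\epsilon}$ is free of rank one over $\Lambda$ by Proposition~\ref{prop:structure-H}(ii), it is torsion-free and contributes the free rank-one part of $\cH^1/\Srel$. On the other hand, $\cH^1_\epsilon$ is free of rank one and $\Srel$ has rank one (Proposition~\ref{prop:AH}(ii)), so the quotient $\cH^1_\epsilon/\Srel$ already has rank zero, i.e. is $\Lambda$-torsion. Hence the torsion submodule of $\cH^1/\Srel$ is precisely $\cH^1_\epsilon/\Srel$, and it suffices to compute $\Char_\Lambda(\cH^1_\epsilon/\Srel)$.

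For that I would split \eqref{eq:PT-epsilon} at the image of $\cH^1_\epsilon$ in $\cX_{-\epsilon}$ to obtain the short exact sequence
\[
0\rightarrow\cH^1_\epsilon/\Srel\rightarrow\cX_{-\epsilon}\rightarrow\Xstr\rightarrow0.
\]
All three terms are $\Lambda$-torsion (by Proposition~\ref{prop:AH}(ii) and (iv)), so multiplicativity of characteristic ideals in short exact sequences of torsion modules yields $\Char_\Lambda(\cX_{-\epsilon})=\Char_\Lambda(\cH^1_\epsilon/\Srel)\cdot\Char_\Lambda(\Xstr)$. Inserting the definitions $\Char_\Lambda(\cX_{-\epsilon})=(L_{-\epsilon})$ and $\Char_\Lambda(\Xstr)=(L_0)$ and rearranging gives $\Char_\Lambda(\cH^1_\epsilon/\Srel)=(L_{-\epsilon}/L_0)$, which is the claim; as a byproduct this also records the divisibility $L_0\mid L_{-\epsilon}$ that feeds into Theorem~\ref{thmA}.

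The argument is essentially formal once the exact sequences of the previous section are available, so I do not expect a genuine obstacle. The one step deserving a little care is the torsion identification: one must verify that $\Srel$ meets the summand $\cH^1_{-\epsilon}$ trivially, so that the free part of $\cH^1/\Srel$ comes entirely from $\cH^1_{-\epsilon}$ and the torsion part is exactly $\cH^1_\epsilon/\Srel$. This is guaranteed by the factorization of the localization map through $\cH^1_\epsilon$ in \eqref{eq:PT-epsilon}, so the computation goes through without difficulty.
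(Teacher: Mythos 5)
Your overall plan is viable and genuinely different from the paper's proof, but it has a gap at exactly the point you flagged as needing care: the containment $\loc(\Srel)\subseteq\cH^1_\epsilon$. The sequence \eqref{eq:PT-epsilon} does not encode this. In \eqref{eq:PT}, the term $\cH^1_\mp$ stands for the quotient $\cH^1/\cH^1_\pm$ (this is what the paper means by ``we have replaced $\cH^1$ in loc.\ cit.\ by the direct sum''), so the map $\Srel\to\cH^1_\epsilon$ in \eqref{eq:PT-epsilon} is the localization map \emph{followed by the projection along} $\cH^1_{-\epsilon}$. It cannot be the localization map itself: reading \eqref{eq:PT} the same way for the sign $\epsilon$ would force $\loc(\Srel)\subseteq\cH^1_+\cap\cH^1_-=0$, which is absurd since $\Srel\neq0$ injects into $\cH^1_\epsilon$. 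Consequently, exactness of \eqref{eq:PT-epsilon} only tells you that $\loc(\Srel)\cap\cH^1_{-\epsilon}=0$ and that $\Srel$ injects into $\cH^1_\epsilon$ under the projection; it does not give $\loc(\Srel)\subseteq\cH^1_\epsilon$, and this weaker information does not determine $(\cH^1/\Srel)_\tors$. Concretely, inside $\Lambda e_+\oplus\Lambda e_-$ the submodule $S=\Lambda\cdot(pe_++e_-)$ meets $\Lambda e_-$ trivially and projects injectively into $\Lambda e_+$ with cokernel $\Lambda/(p)$, yet $(\Lambda e_+\oplus\Lambda e_-)/S$ is free of rank one, so its torsion submodule is zero. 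So without the containment, your identification of $(\cH^1/\Srel)_\tors$ with $\coker\left(\Srel\to\cH^1_\epsilon\right)$, and hence the final formula, could fail.

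The gap is fixable with facts the paper already quotes, which is why your statement is nevertheless true. By Proposition~\ref{prop:AH}(i), $\Cloc\subseteq\cH^1_\epsilon$, and by Proposition~\ref{prop:AH}(ii)--(iii), the quotient $\Srel/\cC$ is $\Lambda$-torsion. Hence for $x\in\Srel$ there is a nonzero $\lambda\in\Lambda$ with $\lambda x\in\cC$, so the $\cH^1_{-\epsilon}$-component $y$ of $\loc(x)$ satisfies $\lambda y=0$; since $\cH^1_{-\epsilon}$ is free (Proposition~\ref{prop:structure-H}(ii)), $y=0$. Thus $\loc(\Srel)\subseteq\cH^1_\epsilon$ (equivalently, $\Srel=\cS_\epsilon$), your decomposition $\cH^1/\Srel\cong(\cH^1_\epsilon/\Srel)\oplus\cH^1_{-\epsilon}$ is valid, and the rest of your argument (multiplicativity of characteristic ideals in $0\to\cH^1_\epsilon/\Srel\to\cX_{-\epsilon}\to\Xstr\to0$) goes through. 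For comparison, the paper's proof avoids this issue altogether: it uses the short exact sequence $0\to\Srel/\cC\to\cH^1/\Cloc\to\cH^1/\Srel\to0$ together with \cite[Proposition~2.1]{HL2} and Proposition~\ref{prop:H1/Cloc}, and the only containment it needs is $\Cloc\subseteq\cH^1_\epsilon$, which is precisely Proposition~\ref{prop:AH}(i).
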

\begin{proof}
Consider the short exact sequence:
\[
0\rightarrow \Srel/\cC\rightarrow \cH^1/\Cloc\rightarrow \cH^1/\Srel\rightarrow 0.
\]
Since the first term is $\Lambda$-torsion, with characteristic ideal generated by $L_0$ (thanks to Proposition~\ref{prop:AH}(iii)), 
\cite[Proposition~2.1]{HL2} tells us that
\[
\Char_\Lambda(\cH^1/\Srel)_\tors(L_0)=\Char_\Lambda(\cH^1/\Cloc)_\tors. 
\]Hence, the result follows from Proposition~\ref{prop:H1/Cloc}.
\end{proof}
We are now ready to prove Theorem~\ref{thmA} stated in the introduction.
\begin{theorem}\label{thm:main}
As elements of  $\Lambda$, we have the divisibilities
\[
\mathrm{lcm}(L_0,L_0^\iota)\large| L_{-\epsilon}\large| L_0L_0^\iota.
\]
\end{theorem}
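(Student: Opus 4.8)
The plan is to reduce the statement to two divisibilities of one-variable flavour and to pinpoint where the involution $\iota$ is forced in. Write $L_{-\epsilon}=M\,L_0$ in $\Lambda$, where by Corollary~\ref{cor:H1/rel} the cofactor $M$ generates $\Char_\Lambda(\cH^1/\Srel)_\tors$. First I would record that $\loc(\Srel)\subseteq\cH^1_\epsilon$: in the $\epsilon$-incarnation of \eqref{eq:PT} the map $\Srel\to\cH^1_{-\epsilon}$ has kernel $\cS_\epsilon$, which has the same $\Lambda$-rank as $\Srel$ by Proposition~\ref{prop:AH}(iv), so its image is torsion inside the free module $\cH^1_{-\epsilon}$ (Proposition~\ref{prop:structure-H}(ii)) and hence vanishes. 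Consequently $(\cH^1/\Srel)_\tors=\cH^1_\epsilon/\loc(\Srel)$ and $M=\Char_\Lambda\bigl(\cH^1_\epsilon/\loc(\Srel)\bigr)$; in particular the factorisation $L_{-\epsilon}=M\,L_0$ already gives $L_0\mid L_{-\epsilon}$.

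For the lower bound $\mathrm{lcm}(L_0,L_0^\iota)\mid L_{-\epsilon}$ it suffices, since $\Lambda$ is a UFD, to adjoin the divisibility $L_0^\iota\mid L_{-\epsilon}$ to $L_0\mid L_{-\epsilon}$. I would deduce this from the algebraic functional equation for $\cX_{-\epsilon}$: as $E$ is defined over $\QQ$, the Weil pairing gives $T\cong T^*(1)$, and the $(-\epsilon)$-condition is self-dual under the local Tate pairing by the very definition of $H^1_\pm(\Psi_n,T)$ as the exact annihilator of $H^1_\pm(\Psi_n,E[p^\infty])$. The characteristic ideal of the Pontryagin dual of a self-dual Selmer group over the anticyclotomic $\Zp$-extension is invariant under $\iota$, so $L_{-\epsilon}=L_{-\epsilon}^\iota$ up to a unit. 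Applying $\iota$ to $L_0\mid L_{-\epsilon}$ then yields $L_0^\iota\mid L_{-\epsilon}^\iota=L_{-\epsilon}$, and the lower bound follows.

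The upper bound $L_{-\epsilon}\mid L_0 L_0^\iota$, equivalently $M\mid L_0^\iota$, is the step I expect to be the real obstacle; it cannot come from the functional equation alone, since when $\Xstr$ is pseudo-null (so that $L_0$ is a unit) the bound asserts that $\cX_{-\epsilon}$ is pseudo-null, a genuinely arithmetic statement. Here I would pass to local Tate duality in the limit: the exact annihilator property promotes, after taking limits, to a perfect $\iota$-sesquilinear pairing between $\cH^1_\epsilon$ and $\varinjlim_n H^1(\Psi_n,E[p^\infty])/H^1_\epsilon(\Psi_n,E[p^\infty])$. Under it, global (Poitou--Tate) duality identifies $\cH^1_\epsilon/\loc(\Srel)$ with the Pontryagin dual of $\loc_p\bigl(\Sel_{-\epsilon}(K_\infty,E[p^\infty])\bigr)$, which is in turn isomorphic to $\Sel_{-\epsilon}(K_\infty,E[p^\infty])/\Sel_\mathrm{str}(K_\infty,E[p^\infty])$. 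Thus the upper bound reduces to showing that this local quotient is dominated by the fine Selmer group, i.e. that $M=\Char_\Lambda\bigl(\cH^1_\epsilon/\loc(\Srel)\bigr)$ divides $L_0^\iota=\Char_\Lambda(\Xstr)^\iota$. For this last, crucial, divisibility I would feed in the precise position of $\Cloc$ inside $\cH^1_\epsilon$ coming from Burungale--Kobayashi--Ota \cite{BKO} (Rubin's conjecture on local points), which controls $\loc(\Srel)\supseteq\Cloc$ relative to its orthogonal complement tightly enough to bound the relevant index by $L_0$. The reason one obtains only divisibilities, and not equalities, is that this comparison of $\loc(\Srel)$ against its dual lattice can lose, at worst, the common factor $\gcd(L_0,L_0^\iota)$ (precisely the gap between $\mathrm{lcm}(L_0,L_0^\iota)$ and $L_0 L_0^\iota$); making this loss explicit through the local points is the heart of the matter.
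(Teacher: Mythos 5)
Your first paragraph is sound: the vanishing of the component of $\loc(\Srel)$ in $\cH^1_{-\epsilon}$, the identification $M=\Char_\Lambda\bigl(\cH^1_\epsilon/\loc(\Srel)\bigr)$ with $L_{-\epsilon}=ML_0$, and hence $L_0\mid L_{-\epsilon}$, all follow from Corollary~\ref{cor:H1/rel} (the paper gets $L_0\mid L_{-\epsilon}$ equivalently from the surjection $\cX_{-\epsilon}\twoheadrightarrow\Xstr$ in \eqref{eq:PT-epsilon}). The other two divisibilities, however, rest on claims that do not hold up. For the lower bound you assert that the $(-\epsilon)$-condition is ``self-dual under the local Tate pairing by the very definition'' and conclude $L_{-\epsilon}=L_{-\epsilon}^\iota$. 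The definition says something weaker: $H^1_{-\epsilon}(\Psi_n,T)$ is the exact annihilator of $H^1_{-\epsilon}(\Psi_n,E[p^\infty])$, i.e.\ the two conditions form a \emph{dual pair}. Self-duality would require $H^1_{-\epsilon}(\Psi_n,T)\otimes\Qp/\Zp=H^1_{-\epsilon}(\Psi_n,E[p^\infty])$, and this fails on corank grounds: the right-hand side has corank $2\#\Xi_n^{-\epsilon}$ while the left-hand side has corank $4p^n-2\#\Xi_n^{-\epsilon}$, and these agree only if $\#\Xi_n^{-\epsilon}=p^n$, which is impossible for $n\ge1$ since $\#\Xi_n^{+}+\#\Xi_n^{-}=p^n$ with both nonzero; the $\pm$ conditions are isotropic but far from maximal isotropic. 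Nor does the dihedral symmetry of $K_\infty/\QQ$ rescue the claim: complex conjugation intertwines the CM action with its conjugate, so it is only $\cO$-semilinear and yields at best $L_{-\epsilon}^\iota=\overline{L_{-\epsilon}}$ up to units (a Frobenius twist on the coefficients of $\Lambda=\cO[[\Gamma]]$), not $\iota$-invariance --- which is exactly why the paper keeps $L_0$ and $L_0^\iota$ apart in Corollaries~\ref{cor:equal}--\ref{cor:LS}.

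For the upper bound, which you correctly flag as the crux, you offer only the hope that \cite{BKO} ``controls $\loc(\Srel)\supseteq\Cloc$ tightly enough''; this is not an argument, and it looks for the $\iota$ in the wrong place: \cite{BKO} enters the paper only through Proposition~\ref{prop:structure-H} (the splitting and freeness of $\cH^1_\pm$), which carries no $\iota$-twist. The missing ingredient --- which in the paper delivers \emph{both} of your outstanding divisibilities at once --- is the comparison with the relaxed/strict pair. Since $\Sstr=0$ (Proposition~\ref{prop:AH}(iv)), one has the Poitou--Tate sequence $0\to\Srel\to\cH^1\to\Xrel\to\Xstr\to0$, hence $(\cH^1/\Srel)_\tors\hookrightarrow(\Xrel)_\tors$ with quotient embedding into $\Xstr$; and Wingberg's duality theorem \cite[Corollary~2.5]{wingberg} --- applicable because, by \cite{billot}, $\Sel_{\mathrm{rel}}(K_\infty,E[p^\infty])$ coincides with the true Selmer group at a supersingular prime --- gives $\Char_\Lambda(\Xrel)_\tors=\Char_\Lambda(\Xstr)^\iota=(L_0^\iota)$. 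Multiplicativity of characteristic ideals then yields $M\mid L_0^\iota$ (your upper bound) and $L_0L_0^\iota/L_{-\epsilon}\mid L_0$, i.e.\ $L_0^\iota\mid L_{-\epsilon}$ (the missing half of your lower bound). Without this functional equation for the relaxed/strict pair, or a genuine substitute for it, neither of your two key divisibilities is established.
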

\begin{proof}
Since $\Sstr=0$, we have  the Poitou--Tate exact sequence
\begin{equation}
0\rightarrow  \Srel\rightarrow\cH^1\rightarrow\Xrel\rightarrow\Xstr\rightarrow0,
\label{eq:PT2}    
\end{equation}
which gives the short exact sequence
\[
0\rightarrow\cH^1/\Srel\rightarrow\Xrel\rightarrow\Xstr\rightarrow0.
\]
This gives
\[
(\cH^1/\Srel)_\tors\hookrightarrow(\Xrel)_\tors\quad\text{and}\quad (\Xrel)_\tors\large /(\cH^1/\Srel)_\tors\hookrightarrow \Xstr.
\]
Since $E$ is supersingular at $p$, the classical $p$-primary Selmer group (called the \textit{true} Selmer group in \cite{agboolahowardsupersingular}) $\Sel(K_\infty,E[p^\infty])$ is equal to $\Sel_\mathrm{rel}(K_\infty,E[p^\infty])$ (see \cite[Remarque~3.3]{billot}). Thus, it follows from \cite[Corollary~2.5]{wingberg} that $\Char_\Lambda(\Xrel)_\tors=\Char_\Lambda (\Xstr)^\iota$. Combined with Corollary~\ref{cor:H1/rel}, we deduce
\[
L_{-\epsilon}/L_0|L_0^\iota, \quad L_0^\iota L_0/L_{-\epsilon}|L_0,
\]
or equivalently 
\[L_{-\epsilon}|L_0L_0^\iota,\quad L_0^\iota| L_{-\epsilon}.\]
Finally, the surjectivity of the last arrow in \eqref{eq:PT-epsilon} gives  $L_0|L_{-\epsilon}$. This concludes the proof of the theorem.
\end{proof}
We have the following immediate corollaries:
\begin{corollary}\label{cor:equal}
If $L_0$ is coprime to $L_0^\iota$, then $(L_{-\epsilon})=(L_0L_0^\iota)$.
\end{corollary}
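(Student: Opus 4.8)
The plan is to obtain this as an immediate squeezing consequence of Theorem~\ref{thm:main}, using only the hypothesis together with the fact, recorded in the introduction, that $\Lambda$ is a unique factorization domain. Because the two outer terms of the divisibility chain collapse to the same ideal under the coprimality assumption, I do not anticipate any genuine difficulty; the only point requiring (minimal) care is the bookkeeping of units, which is harmless since characteristic elements are only defined up to a unit anyway.

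First I would translate the hypothesis through the standard identity relating the two symmetric functions of a pair of elements of a UFD: for all $a,b\in\Lambda$ one has, up to a unit, $\gcd(a,b)\cdot\mathrm{lcm}(a,b)=ab$. Saying that $L_0$ is coprime to $L_0^\iota$ means exactly that $\gcd(L_0,L_0^\iota)$ is a unit, so this identity yields
\[
\mathrm{lcm}(L_0,L_0^\iota)=L_0L_0^\iota
\]
up to a unit; equivalently, the leftmost and rightmost terms in the chain of Theorem~\ref{thm:main} generate the same ideal of $\Lambda$.

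It then remains to feed this back into Theorem~\ref{thm:main}, namely
\[
\mathrm{lcm}(L_0,L_0^\iota)\large|L_{-\epsilon}\large|L_0L_0^\iota.
\]
Under the coprimality hypothesis the two bounding terms are associates, so $L_{-\epsilon}$ is trapped between two generators of the single ideal $(L_0L_0^\iota)$. This forces $(L_{-\epsilon})=(L_0L_0^\iota)$, which is precisely the assertion of the corollary.
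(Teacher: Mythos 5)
Your proof is correct and matches the paper's intent exactly: the paper records this as an ``immediate corollary'' of Theorem~\ref{thm:main} with no written proof, and the implicit argument is precisely your squeeze --- coprimality makes $\mathrm{lcm}(L_0,L_0^\iota)$ and $L_0L_0^\iota$ associates in the UFD $\Lambda$, so the divisibility chain forces $(L_{-\epsilon})=(L_0L_0^\iota)$.
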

\begin{corollary}\label{cor:square}
If $(L_0)=(L_0^\iota)$ as ideals in $\Lambda$, then $L_{-\epsilon}|L_0^2$.
\end{corollary}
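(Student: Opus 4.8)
The plan is to deduce this corollary directly from the upper divisibility already furnished by Theorem~\ref{thm:main}, so that no further input from Iwasawa theory is needed. First I would invoke Theorem~\ref{thm:main}, whose right-hand divisibility reads $L_{-\epsilon}\mid L_0L_0^\iota$ as elements of $\Lambda$. This is the only half of the theorem that the corollary requires.

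Next I would unpack the hypothesis $(L_0)=(L_0^\iota)$. Since $\Lambda$ is a unique factorization domain (hence an integral domain in which principal ideals coincide exactly when their generators are associates), the equality of these two principal ideals means precisely that there is a unit $u\in\Lambda^\times$ with $L_0^\iota=uL_0$. Substituting this relation gives $L_0L_0^\iota=uL_0^2$, so as ideals $(L_0L_0^\iota)=(L_0^2)$.

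Combining the two steps then finishes the argument: from $L_{-\epsilon}\mid L_0L_0^\iota$ and $(L_0L_0^\iota)=(L_0^2)$ we obtain $L_{-\epsilon}\mid L_0^2$, which is the asserted conclusion. The one point to state carefully, rather than a genuine obstacle, is that every divisibility in sight is understood up to a unit of $\Lambda$; because divisibility is thereby an ideal-theoretic notion, the deduction is insensitive to the choice of the characteristic generators $L_0$ and $L_{-\epsilon}$, and the substitution of the associate $L_0^\iota=uL_0$ changes nothing. I do not anticipate any difficulty beyond this bookkeeping: the corollary is a purely formal consequence of the upper bound in Theorem~\ref{thm:main} together with the observation that the hypothesis forces $L_0$ and $L_0^\iota$ to be associates, collapsing the product $L_0L_0^\iota$ to $L_0^2$ up to a unit.
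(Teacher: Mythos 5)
Your proof is correct and is precisely the argument the paper intends: the corollary is stated as an immediate consequence of Theorem~\ref{thm:main}, and your deduction --- that $(L_0)=(L_0^\iota)$ forces $L_0^\iota=uL_0$ for a unit $u\in\Lambda^\times$, so the upper bound $L_{-\epsilon}\mid L_0L_0^\iota$ collapses to $L_{-\epsilon}\mid L_0^2$ --- is exactly the intended formal verification, with the unit-ambiguity bookkeeping handled properly.
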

\begin{corollary}
\label{cor:LS}
Let $F\in \Lambda$ be an irreducible element. Then $F\nmid L_{-\epsilon}$ if and only if $F\nmid L_0$ and $F^\iota\nmid L_0$.
\end{corollary}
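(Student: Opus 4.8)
The plan is to deduce Corollary~\ref{cor:LS} directly from the two-sided divisibility in Theorem~\ref{thm:main}, treating the two implications separately. Working in the unique factorization domain $\Lambda$, I would fix an irreducible element $F$ and analyze when $F$ divides $L_{-\epsilon}$ using the chain $\mathrm{lcm}(L_0,L_0^\iota)\,|\,L_{-\epsilon}\,|\,L_0L_0^\iota$.

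For the forward direction (contrapositive), suppose $F\,|\,L_0$ or $F^\iota\,|\,L_0$. If $F\,|\,L_0$, then $F\,|\,\mathrm{lcm}(L_0,L_0^\iota)$, and since $\mathrm{lcm}(L_0,L_0^\iota)\,|\,L_{-\epsilon}$, we get $F\,|\,L_{-\epsilon}$. If instead $F^\iota\,|\,L_0$, I would apply the involution $\iota$: since $\iota$ is a ring automorphism of $\Lambda$, it sends irreducibles to irreducibles and $F^\iota\,|\,L_0$ is equivalent to $F\,|\,L_0^\iota$; the latter gives $F\,|\,\mathrm{lcm}(L_0,L_0^\iota)\,|\,L_{-\epsilon}$ as before. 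Either way $F\,|\,L_{-\epsilon}$, establishing that $F\nmid L_{-\epsilon}$ forces both $F\nmid L_0$ and $F^\iota\nmid L_0$.

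For the reverse direction, suppose $F\nmid L_0$ and $F^\iota\nmid L_0$. The second hypothesis, via $\iota$, is equivalent to $F\nmid L_0^\iota$. Hence $F$ divides neither $L_0$ nor $L_0^\iota$, and since $F$ is irreducible (hence prime in the UFD $\Lambda$), it does not divide the product $L_0L_0^\iota$. Because $L_{-\epsilon}\,|\,L_0L_0^\iota$ by Theorem~\ref{thm:main}, it follows that $F\nmid L_{-\epsilon}$. Combining the two directions yields the stated equivalence.

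I do not anticipate a genuine obstacle here, as the statement is a formal consequence of unique factorization and the compatibility of $\iota$ with divisibility. The only point requiring a moment of care is the interaction between the involution and the lcm: one should note that $\iota$ permutes the factors $L_0$ and $L_0^\iota$ and therefore fixes $\mathrm{lcm}(L_0,L_0^\iota)$ up to a unit, which is what makes the two hypotheses on $F$ symmetric. Once this symmetry is observed, both implications follow by a short divisibility argument, so the proof is essentially a routine unwinding of Theorem~\ref{thm:main}.
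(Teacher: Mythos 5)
Your proof is correct and is precisely the routine divisibility argument the paper intends: Corollary~\ref{cor:LS} is stated there as an immediate consequence of Theorem~\ref{thm:main}, with no separate proof given. Both directions of your argument (using $\mathrm{lcm}(L_0,L_0^\iota)\,|\,L_{-\epsilon}$ for one implication and $L_{-\epsilon}\,|\,L_0L_0^\iota$ plus primality of $F$ in the UFD $\Lambda$ for the other) are exactly the intended unwinding.
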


\begin{corollary}\label{cor:mu}
The $\mu$-invariant of $\Xstr$ is zero if and only if the $\mu$-invariant of $\cX_{-\epsilon}$ is zero. 
\end{corollary}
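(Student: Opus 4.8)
The plan is to reduce the statement to the divisibility chain already established in Theorem~\ref{thm:main} and to read off the equivalence at the level of $\mu$-invariants. First I would recall that, since $\cO$ is unramified over $\Zp$, the prime $p$ remains a uniformizer of $\cO$; consequently $p$ is a prime element of $\Lambda\cong\cO\lb X\rb$, and for a torsion $\Lambda$-module $M$ with characteristic element $L$ the $\mu$-invariant $\mu(M)$ is precisely the largest integer $m$ such that $p^m\mid L$ in $\Lambda$. I will write $\mu(L)$ for this $p$-adic valuation of an element $L\in\Lambda$.

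Next I would record the three elementary properties of $\mu$ that drive the argument: it is additive under multiplication, it satisfies $\mu(\mathrm{lcm}(L,L'))=\max(\mu(L),\mu(L'))$, and it is invariant under the involution $\iota$. The last point holds because $\iota$ is a continuous $\cO$-algebra automorphism of $\Lambda$ which merely permutes the group-like elements of $\Gamma$ while fixing $p$; applying $\iota$ to a divisibility $p^m\mid L$ shows $\mu(L_0^\iota)=\mu(L_0)$.

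With these in hand, I would apply $\mu$ to the chain $\mathrm{lcm}(L_0,L_0^\iota)\mid L_{-\epsilon}\mid L_0L_0^\iota$ of Theorem~\ref{thm:main}. The two outer quantities become $\mu(\mathrm{lcm}(L_0,L_0^\iota))=\max(\mu(L_0),\mu(L_0^\iota))=\mu(L_0)$ and $\mu(L_0L_0^\iota)=\mu(L_0)+\mu(L_0^\iota)=2\mu(L_0)$, yielding the sandwich
\[
\mu(L_0)\le\mu(L_{-\epsilon})\le 2\mu(L_0).
\]
From this, if $\mu(\Xstr)=\mu(L_0)=0$ then the right-hand inequality forces $\mu(\cX_{-\epsilon})=\mu(L_{-\epsilon})=0$, while if $\mu(\cX_{-\epsilon})=0$ the left-hand inequality forces $\mu(L_0)=0$; this is exactly the asserted equivalence.

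I do not anticipate any genuine obstacle here, as all the analytic content is already contained in Theorem~\ref{thm:main}. The only points demanding care are standard: identifying the module-theoretic $\mu$-invariant with the $p$-adic valuation of a characteristic element, and checking that this valuation is unchanged by $\iota$. Both are routine in the unramified setting at hand, which is what makes the corollary immediate.
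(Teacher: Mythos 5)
Your proof is correct and is exactly the argument the paper intends: the corollary is stated as an immediate consequence of Theorem~\ref{thm:main}, obtained by reading off $p$-adic valuations (equivalently $\mu$-invariants, since $p$ remains prime in $\Lambda=\cO\lb\Gamma\rb$ with $\cO$ unramified) from the divisibility chain, using $\mu(L_0^\iota)=\mu(L_0)$. Your write-up merely makes explicit the standard facts the paper leaves implicit, so there is nothing to add.
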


\begin{corollary}
 $\Xstr$ is pseudo-null if and only if $\cX_{-\epsilon}$ is pseudo-null.
\end{corollary}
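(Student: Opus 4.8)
The plan is to reduce the pseudo-nullity of each module to a unit condition on its characteristic element and then read off the conclusion from the divisibilities of Theorem~\ref{thm:main}. Since $\Gamma\cong\Zp$, we have $\Lambda\cong\cO[[T]]$, a two-dimensional regular local ring; consequently a finitely generated torsion $\Lambda$-module is pseudo-null precisely when its characteristic element is a unit in $\Lambda$ (equivalently, its characteristic ideal is the whole of $\Lambda$). By Proposition~\ref{prop:AH}(ii),(iv) both $\Xstr$ and $\cX_{-\epsilon}$ are torsion, so $\Xstr$ is pseudo-null if and only if $L_0\in\Lambda^\times$, and $\cX_{-\epsilon}$ is pseudo-null if and only if $L_{-\epsilon}\in\Lambda^\times$.

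The elementary observation driving the equivalence is that $\iota$ is a ring automorphism of $\Lambda$, so it carries units to units (and, more generally, irreducibles to irreducibles). For the forward implication, I would suppose $L_0$ is a unit; then $L_0^\iota$ is a unit, hence so is the product $L_0L_0^\iota$, and the divisibility $L_{-\epsilon}\mid L_0L_0^\iota$ from Theorem~\ref{thm:main} forces $L_{-\epsilon}$ to be a unit, i.e. $\cX_{-\epsilon}$ is pseudo-null. Conversely, if $L_{-\epsilon}$ is a unit, then $\mathrm{lcm}(L_0,L_0^\iota)\mid L_{-\epsilon}$ shows that $\mathrm{lcm}(L_0,L_0^\iota)$ is a unit; since $L_0$ divides this least common multiple, $L_0$ is a unit and $\Xstr$ is pseudo-null.

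I expect no substantial obstacle: once Theorem~\ref{thm:main} is in hand, the statement is a formal consequence of the two divisibilities together with the stability of units under $\iota$. As an alternative phrasing, one could argue directly from Corollary~\ref{cor:LS}: $L_{-\epsilon}$ is a unit if and only if no irreducible $F$ divides it, which by that corollary means $F\nmid L_0$ and $F^\iota\nmid L_0$ for every irreducible $F$; as $F\mapsto F^\iota$ permutes the irreducibles of $\Lambda$, this is exactly the condition that $L_0$ be a unit. The only point requiring care is the translation between pseudo-nullity and the unit condition, which rests on $\Lambda$ having Krull dimension two.
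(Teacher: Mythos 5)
Your proof is correct and matches the paper's intent: the paper states this as an immediate corollary of Theorem~\ref{thm:main}, and your argument---translating pseudo-nullity of a torsion $\Lambda$-module into its characteristic element being a unit, then applying the two divisibilities together with the fact that $\iota$ preserves units---is precisely the deduction the paper leaves implicit. The care you take over the Krull-dimension-two justification is appropriate and introduces no gap.
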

\begin{remark}
In light of \cite[Conjecture~A]{CoatesSujatha_fineSelmer} and \cite[Conjecture~B]{matar18}, it seems  plausible to expect  the $\mu$-invariant of $\Xstr$ to vanish in our current setting. Corollary~\ref{cor:mu} gives an equivalence of this expectation in terms of $\cX_{-\epsilon}$. 
\end{remark}
We conclude this note with the following speculative remark.
\begin{remark}
Over the cyclotomic $\Zp$-extension of $\QQ$, Greenberg predicted that the characteristic ideal of the dual fine Selmer group is generated by cyclotomic polynomials, whose exponents are related to the growth of the Mordell--Weil ranks of the elliptic curve inside the $\Zp$-extension (see \cite[Problem~0.7]{KP}). 

 Let 
\[
\eta_n=\begin{cases}\frac{\rank_\ZZ E(K_n)-\rank_\ZZ E(K_{n-1})}{p^{n-1}(p-1)}&n>0,\\
\rank_\ZZ E(K)&n=0.\end{cases}
\]
It is known that $\eta_n$ is always an even integer and that $\eta_n=2$ (resp. 0) for sufficiently large $n$ if  parity is the same as (resp. opposite of) $\epsilon$ (see for example \cite[P.6]{Gre_PCMS}). We set \[
e_n=\begin{cases}
\eta_n/2-1&\text{if the parity of $n$ is the same as $\epsilon$,}\\
\eta_n/2&\text{otherwise.}
\end{cases}
\]
We propose the following naive guess of an analogue of \cite[Problem~0.7]{KP} in our current setting:
\begin{equation}\label{eq:guess}
   L_0\stackrel ? =\prod_{n\ge0,e_n>0}\Phi_n^{e_n-1}.
\end{equation}
\end{remark}

\bibliographystyle{amsalpha}
\bibliography{references}

\end{document}